  \def\sw#1{{\sb{(#1)}}}
  \def\proof{{\sl Proof.~~}}
  \def\endproof{\hbox{$\sqcup$}\llap{\hbox{$\sqcap$}}\medskip}
  \def\<{{\langle}}
  \def\>{{\rangle}}
  \def\eps{\varepsilon}
  \def\note#1{{}}
\def\flip{{\rm \sf{\; flip\; }}}
  \def\note#1{}
  \def\beq{\begin{equation}}
  \def\eeq{\end{equation}}
  \def\id{\mathrm{id}}
  \def\ot{{\otimes}}
     \def\1{\mathbf{1}}
\def\act{\!\cdot\!}
\def\rcosmash{\!\!\blacktriangleright\!\!<\!}
\def\lsmash{\!>\!\!\!\triangleleft}
\def\k{\Bbbk}
  \newcounter{zlist}
  \newenvironment{zlist}{\begin{list}{(\arabic{zlist})}{
  \usecounter{zlist}\leftmargin2.5em\labelwidth2em\labelsep0.5em
  \topsep0.6ex
  \parsep0.3ex plus0.2ex minus0.1ex}}{\end{list}}
  \newcounter{blist}
  \newenvironment{blist}{\begin{list}{(\alph{blist})}{
  \usecounter{blist}\leftmargin2.5em\labelwidth2em\labelsep0.5em
  \topsep0.6ex 
  \parsep0.3ex plus0.2ex minus0.1ex}}{\end{list}}
  \newcounter{rlist}
   \newcounter{alist}
  \renewcommand{\subjclassname}{\textup{2010} Mathematics Subject
        Classification}
\def\stac#1{\raise-.2cm\hbox{$\stackrel{\displaystyle\otimes}{\scriptscriptstyle{#1}}$}}
\def\cten#1{\raise-.2cm\hbox{$\stackrel{\displaystyle\widehat{\otimes}}
{\scriptscriptstyle{#1}}$}}
  \def\Label#1{\label{#1}\ifmmode\llap{[#1] }\else
  \marginpar{\smash{\hbox{\tiny [#1]}}}\fi}
  \def\Label{\label}
  \newtheorem{proposition}{Proposition}[section]
  \newtheorem{lemma}[proposition]{Lemma}
  \newtheorem{theorem}[proposition]{Theorem}
  \theoremstyle{definition}
  \newtheorem{definition}[proposition]{Definition}
  \newtheorem{example}[proposition]{Example}
  \theoremstyle{remark}
  \newtheorem{remark}[proposition]{Remark}
  \theoremstyle{definition}
\begin{document}

 \title{$R$-smash products of Hopf quasigroups}
 \author{Tomasz Brzezi\'nski}
 \address{{\bf Tomasz Brzezi\'nski: }Department of Mathematics, Swansea University,
  Singleton Park,
  Swansea SA2 8PP, U.K.}
  \email{T.Brzezinski@swansea.ac.uk}
  
 \author{Zhengming Jiao}
 \address{{\bf Zhengming Jiao: } Department of Mathematics,
Henan Normal University,
Xinxiang 453002, Henan,
Peoples Republic of China } 
 \email{zmjiao@henannu.edu.cn}

\date{May 2010}

  \begin{abstract}
The theory of $R$-smash products for Hopf quasigroups is developed.\\

\noindent{\bf \subjclassname :} {16T05; 16T15}\\
{\bf Keywords}: Hopf quasigroup; $R$-smash product.
  \end{abstract}
  \maketitle

\section{Introduction}
Hopf quasigroups and Hopf coquasigroups were introduced in \cite{KliMaj:Hop}. These are non-associative (or non-coassociative) generalisations of Hopf algebras, in which the antipode provides one with a certain level of control over the non-associativity. In particular Hopf quasigroups are examples of unital coassociative $H$-bialgebras introduced in \cite[Section~2]{Per:alg}. They can be understood as linearisations of loops \cite{Alb:qua}.  Also in \cite{KliMaj:Hop} smash products of Hopf quasigroups were studied. It has been shown in \cite{BrzJiao:Act} that a standard form of a smash product forces one to replace the conventional associativity of action (assumed in  \cite{KliMaj:Hop} from the onset) by a similar condition involving the antipode. In this note, which is a sequel to \cite{BrzJiao:Act}, we look at $R$-smash products \cite{CaeIon:Sma} of Hopf quasigroups and, briefly, at $W$-smash coproducts of Hopf coquasigroups. This analysis reveals that some of the conventional requirements on the twisting map $R$ need be replaced by similar conditions in which the antipode plays a prominent role; see Theorem~\ref{def.RsmashP} and Definition~\ref{def.allterms} for details.

All algebras and coalgebras are over a field $\k$ and they are assumed to be unital and counital respectively, but are not assumed to be associative or coassociative unless stated atherwise. Unadorned tensor product symbol represents the tensor product of $\k$-vector spaces. We use the standard Sweedler notation for coproducts $\Delta(h) = h\sw 1\ot h\sw 2$ (summation understood) even if the coproduct $\Delta$ is not assumed to be associative.

\section{$R$-smash products of Hopf quasigroups}\label{sec.R-smash}
 \setcounter{equation}{0}
We begin by introducing terminology used in this note. 

\begin{definition}\label{def.allterms}
Let $H$ be an algebra with product $\mu_H$ and unit $1_H$ and a coalgebra with coproduct $\Delta_H$ and counit $\eps_H$ that are algebra morphisms. Similarly, let $A$ be an algebra with product $\mu_A$ and unit $1_A$ and a coalgebra with coproduct $\Delta_A$ and counit $\eps_A$ that are algebra morphisms. Consider linear maps $S_H: H\to H$ and $R:H\ot A\to A\ot H$. The map $R$ is said to be:
\begin{itemize}
\item {\em left normal} (resp.\ {\em right normal}) if 
$$
R\circ( \id_H \ot 1_A) = 1_A \ot \id_H, \qquad (\mbox{resp. } R\circ (1_H\ot \id_A) = \id_A\ot 1_H), 
$$
and it is said to be {\em normal} if it is both left and right normal;
\item {\em left multiplicative} if
$$
R\circ (\id_H\ot \mu_A) = (\mu_A\ot \id_H)\circ (\id_A \ot R)\circ (R\ot\id_A);
$$
\item {\em right $S_H$-multiplicative} if
$$
R\circ (\mu_H \ot \id_A)\circ (\id_H\ot S_H\ot \id_A) = (A\ot \mu_H)\circ (R\ot \id_H)\circ (\id_H\ot R)\circ (\id_H\ot S_H\ot \id_A);
$$
\item {\em right $S_H$-normal} if
$$
R\circ (S_H\ot \id_A)\circ \flip \circ R\circ (1_H\ot \id_A) = 1_H\ot \id_A.
$$
\end{itemize}
Dually, the map $R$ is said to be:
\begin{itemize}
 \item {\em left conormal} (resp.\ {\em right conormal}) if 
$$
( \eps_A \ot \id_H)\circ R = \id_H\ot \eps_A, \qquad (\mbox{resp. } (\id_A\ot \eps_H)\circ  R= \eps_H\ot \id_A), 
$$
and it is said to be {\em conormal} if it is both left and right conormal;
\item {\em left comultiplicative} if
$$
(\Delta_A\ot \id_H)\circ R = (\id_A\ot R)\circ (R\ot \id_A)\circ (\id_H \ot \Delta_A);
$$
\item {\em right $S_H$-comultiplicative} if
$$
(\id_A\ot S_H\ot \id_H)\circ (\id_A\ot \Delta_H)\circ R = (\id_A\ot S_H\ot \id_H)\circ (R\ot \id_H)\circ(\id_H\ot R)\circ (\Delta_H\ot A);
$$
\item {\em right $S_H$-conormal} if
$$
(\id_A\ot \eps_H)\circ R\circ \flip \circ (\id_A\ot S_H)\circ R = \eps_H\ot \id_A.
$$
\end{itemize}
\end{definition}

The action of $R$ on elements is denoted by
$$
R(h\ot a) = \sum_R a_R\ot h^R = \sum_r a_r\ot h^r, \quad \mbox{etc.},
$$
for all $h\in H$ and $a\in A$. The reader is encouraged to write down all the above requirements on $R$ in terms of this notation. For example, $R$ is left multiplicative if 
\begin{equation}\label{LM}
\sum_R (ab)_R \ot h^R = \sum_{R,r} a_R b_r \ot h^{Rr},
\end{equation}
and is right $S$-multiplicative if
\begin{equation}\label{SRM}
\sum_R a_R\ot (gS(h))^R = \sum_{R,r} a_{Rr}\ot g^r S(h)^R,
\end{equation}
for all $a,b\in A$ and $g,h\in H$, etc.

We are particularly interested in the case in which $H$ and $A$ are Hopf quasigroups or Hopf coquasigroups, and $S_H$ is the antipode of $H$. We will concentrate on the former case, as the latter can be treated dually. Recall from \cite{KliMaj:Hop} that $(H,\mu_H,1_H,\Delta_H,\eps_H,S_H)$ as in Definition~\ref{def.allterms} is called a {\em Hopf quasigroup} provided $\Delta_H$ is coassociative and the following {\em Hopf quasigroup identities} are fulfilled
\begin{equation}\label{quasi1}
S_H(h\sw 1)(h\sw 2g)  = g\eps(h)= h\sw 1(S_H(h\sw 2) g) ,
\end{equation}
\begin{equation}\label{quasi2}
(gh\sw 1)S_H(h\sw 2) = g\eps(h)= (gS_H(h\sw 1))h\sw 2 .
\end{equation}
for all $g,h\in H$. The  identities \eqref{quasi1}--\eqref{quasi2} ensure that a Hopf quasigroup is  an {\em $H$-bialgebra} with left division $h\setminus g = S_H(h)g$ and right division $g/h = gS_H(h)$; see \cite[Definition~2]{Per:alg}.  It is proven in \cite{KliMaj:Hop} that the antipode $S_H$ is antimultiplicative and anticomultiplicative and it immediately follows from the Hopf quasigroup identities that $S_H$ enjoys the standard antipode property.

\begin{definition}\label{def.Rsmash} 
Let $H$ and $A$ be Hopf quasigroups, $R: H\ot A\to A\ot H$ a $\k$-linear map. An {\em $R$-smash product} of $H$ and $A$ is a  Hopf quasigroup $A\lsmash_R  H$  equal to $A\ot H$ as a vector space, with  tensor product coproduct, unit and counit, and the multiplication 
\begin{equation}\label{Rsmashmulti}
\mu = (\mu_A\ot \mu_H)\circ (\id_A\ot R\ot \id_H) 
\end{equation}
and antipode 
\begin{equation}\label{Rsmashanti}
S = R\circ (S_H\ot S_A)\circ \flip .
\end{equation}
\end{definition}

The aim of this note is to determine necessary and sufficient conditions for $R$ to produce an $R$-smash product of Hopf quasigroups. These are listed in the following theorem, which is a Hopf quasigroup version of \cite[Corollary~4.6]{CaeIon:Sma}

\begin{theorem}\label{def.RsmashP}
Let $H, A$ be Hopf quasigroups, $R: H\ot A\to A\ot H$ a $\k$-linear map. If $R$ is left multiplicative and left conormal, then the following statements are equivalent:
\begin{zlist}
\item
$A\lsmash_R  H$ is an $R$-smash product Hopf quasigroup for $H$ and $A$;
\item 
The map $R$ is a coalgebra map that is normal, right $S_H$-multiplicative and right $S_H$-conormal.
\end{zlist}
\end{theorem}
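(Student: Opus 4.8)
The plan is to check the defining axioms of a Hopf quasigroup for the candidate object $A\lsmash_R H$ one at a time and to read off from each the matching condition on $R$; the equivalence (1)$\Lra$(2) then follows by comparing the two resulting lists. Because the coalgebra structure of $A\lsmash_R H$ is the tensor-product one and both $A$ and $H$ are coassociative with counital, multiplicative coproducts, coassociativity and counitality of $\Delta=\Delta_A\ot\Delta_H$ are automatic. The genuine content is therefore (i) unitality of the product \eqref{Rsmashmulti}, (ii) the requirement that $\Delta$ and $\eps$ be algebra morphisms, and (iii) the four Hopf quasigroup identities \eqref{quasi1}--\eqref{quasi2} for the antipode \eqref{Rsmashanti}. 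Throughout I write $R(h\ot a)=\sum_R a_R\ot h^R$.

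First I would dispose of the structural axioms (i)--(ii). A one-line computation gives $(1_A\ot1_H)(b\ot g)=\sum_R b_R\ot (1_H)^R g$ and $(a\ot h)(1_A\ot1_H)=\sum_R a(1_A)_R\ot h^R$, so $1_A\ot1_H$ is a left unit exactly when $R$ is right normal and a right unit exactly when $R$ is left normal; hence unitality is equivalent to normality. For the counit, multiplicativity of $\eps_A$ and $\eps_H$ collapses $\eps\big((a\ot h)(b\ot g)\big)$ to $\eps_A(a)\eps_H(g)\sum_R\eps_A(b_R)\eps_H(h^R)$, and this equals $\eps(a\ot h)\eps(b\ot g)$ precisely because $(\eps_A\ot\eps_H)\circ R=\eps_H\ot\eps_A$, which is already forced by the standing left conormality; so $\eps$ is automatically an algebra map. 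For $\Delta$ I would expand both $\Delta\circ\mu$ and $(\mu\ot\mu)\circ(\id\ot\flip\ot\id)\circ(\Delta\ot\Delta)$ on $(a\ot h)\ot(b\ot g)$, using that $\Delta_A,\Delta_H$ are algebra morphisms to pull the coproducts inside the products, and then specialise $a=1_A$, $g=1_H$ to strip the expression down to $\Delta_{A\ot H}\circ R=(R\ot R)\circ\Delta_{H\ot A}$; combined with the counit clause above this says exactly that $R$ is a coalgebra map. Thus, modulo the antipode, (1)$\Lra$ ``$R$ normal and a coalgebra map''.

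The crux is (iii). Substituting $x=a\ot h$, $y=b\ot g$ and the induced antipode $S(a\ot h)=\sum_R S_A(a)_R\ot S_H(h)^R$ into \eqref{quasi1}--\eqref{quasi2}, I would first use left multiplicativity \eqref{LM} to unnest every iterated product appearing in the $A$-leg and the coalgebra-map property to commute coproducts past $R$, so that each term is presented as a single product in $A$ tensored with a single product in $H$, with the $R$-crossings gathered on one side. The $A$-legs then collapse by the Hopf quasigroup identities of $A$, leaving factors of $\eps_A$ and a purely $H$-sided identity in which the antipode enters only through combinations $g\,S_H(h)$ and $S_H(h)\,g$. The conditions right $S_H$-multiplicative \eqref{SRM} and right $S_H$-conormal are tailored exactly to this situation: the former reorganises the crossed terms $g^r S_H(h)^R$ so that the Hopf quasigroup identities of $H$ can be invoked, while the latter supplies the final $\eps_H$-collapse of the residual double-$R$ tangle. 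Carrying this out for all four identities yields (1)$\Ra$ right $S_H$-multiplicative and right $S_H$-conormal; conversely, substituting these two conditions (together with normality, left multiplicativity, left conormality and the coalgebra-map property) back into the same expansion verifies \eqref{quasi1}--\eqref{quasi2}, giving (2)$\Ra$(1).

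For the forward implication the delicate point is extraction: the identities \eqref{quasi1}--\eqref{quasi2} are known only on all of $A\lsmash_R H$, so to isolate the ``pure $R$'' relations I would specialise the free tensor legs to units or compose with the appropriate counit, making the $A$- and $H$-antipode factors degenerate so that only the desired identity on $R$ survives. I expect the real obstacle to be bookkeeping in the non-associative setting: since neither $\mu_A$ nor $\mu_H$ may be reassociated, one must apply precisely the correct member of the four Hopf quasigroup identities (left versus right division, in $A$ versus in $H$) at each cancellation, and the whole reason the ordinary right multiplicativity and right conormality get replaced by their $S_H$-twisted forms is that this twist is exactly what compensates for the missing associativity---mirroring the phenomenon already found for ordinary smash products in \cite{BrzJiao:Act}. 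Once the list of necessary and sufficient conditions on $R$ is assembled, matching it against the hypotheses closes the equivalence.
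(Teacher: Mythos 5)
Your skeleton --- structural axioms first (unit $\Lra$ normality, multiplicative $\Delta$ $\Lra$ coalgebra map, counit automatic from left conormality), then the four Hopf quasigroup identities, with extraction by specialisation in the forward direction --- matches the paper's, and the structural part of your argument is correct. The first genuine gap is in (2)$\Rightarrow$(1): your plan to ``collapse the $A$-legs by the Hopf quasigroup identities of $A$'' cannot be carried out as stated for the identities \eqref{quasi2}. Writing $S(a\ot h)=\sum_R S_A(a)_R\ot S_H(h)^R$, the $A$-components arrive in the form $(b\,a\sw 1_R)\,S_A(a\sw 2)_r$, i.e.\ with $R$ acting \emph{after} $S_A$, whereas the quasigroup identity of $A$ applies only to expressions of the form $(b\,a_R\sw 1)\,S_A(a_R\sw 2)$. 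Passing from one to the other needs the commutation rule $R\circ(\id_H\ot S_A)=(S_A\ot\id_H)\circ R$ together with left comultiplicativity of $R$; these are precisely Lemma~\ref{def.RSA} (equation \eqref{RSA}) and Lemma~\ref{lemma.con-coa} in the paper, and \eqref{RSA} in particular has a genuinely nontrivial proof (insert $a\sw 2\,S_A(a\sw 3)$ using \eqref{quasi1} for $A$, unfold by left multiplicativity and the coalgebra-map property \eqref{coalgmap}, recollapse by \eqref{quasi1}). Your proposal never produces, or even mentions, this compatibility of $R$ with $S_A$, and without it the verification of \eqref{quasi2} stalls; the same lemma is also needed in the forward direction to put the hypothesis \eqref{quasi2} into usable form.

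The second gap is in (1)$\Rightarrow$(2): your extraction recipe --- ``specialise the free tensor legs to units or compose with the appropriate counit'' --- does recover normality, the coalgebra-map property, and (from \eqref{quasi1}, evaluated at $a=1_A$, $g=1_H$ after applying $\id_A\ot\eps_H$) the first of equations \eqref{QRM}, hence right $S_H$-conormality via Lemma~\ref{lem.quasimult}; but it is not strong enough to recover right $S_H$-multiplicativity. The identity extracted from \eqref{quasi2} for $A\lsmash_R H$ contains the tangle $(b\,S_A(a\sw 1_{Rr}))\,a\sw 2_{\bar R}$ multiplied by $\eps_H$-factors carrying the indices $R,r,\bar R$; setting $a$, $b$, $g$ or $h$ equal to units either destroys the information you want (the $R$-crossings on $g$ and $S_H(h)$) or leaves the $S_A$-tangle intact, so no unit specialisation yields \eqref{WRM}. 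The paper's proof instead substitutes the element $b=\sum_{\hat R,\hat r}a\sw 1_{\hat R\hat r}\,\eps_H(g\sw 1^{\hat r}S_H(h)\sw 1^{\hat R})$ --- chosen to depend on $a$, $g$ and $h$ --- and only then, using the coalgebra-map property \eqref{coalgmap}, the antipode axiom of $A$ and left conormality, does the $A$-leg cancel, leaving \eqref{WRM}; right $S_H$-multiplicativity then follows from Lemma~\ref{lem.quasimult}. This substitution is the key idea of the forward direction and is absent from your plan, so as it stands the proposal does not prove the implication (1)$\Rightarrow$(2).
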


Before the proof of Theorem~\ref{def.RsmashP} is given we state and prove three lemmata.

\begin{lemma}\label{lemma.con-coa}
Let $H, A$ be Hopf quasigroups, $R: H\ot A\to A\ot H$ a $\k$-linear map. If $R$ is a left conormal coalgebra map, then,
for all $h\in H, a\in A$, 
\begin{equation}\label{coalg1}
\sum_Ra_R\ot h^R\sw 1\ot h^R \sw 2=\sum_Ra_R\ot h\sw 1^R\ot h\sw 2,
=\sum_Ra_R\ot h\sw 1\ot h \sw 2^R,
\end{equation} 
hence
\begin{equation}\label{coalg3}
R(h\ot a) = \sum_Ra_R\eps_H(h\sw 1^R)\ot h\sw 2 = \sum_Ra_R\ot h\sw 1\eps_H(h \sw 2^R).
\end{equation}
Furthermore, $R$ is left comultiplicative.
\end{lemma}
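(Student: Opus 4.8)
The plan is to unpack the two hypotheses into Sweedler-type identities and then read off the three assertions by collapsing one tensor leg at a time. Being a coalgebra map, $R$ is counital and, more importantly for us, comultiplicative; writing comultiplicativity out for the tensor-product coalgebra structures on $H\ot A$ and $A\ot H$ gives, for all $h\in H$ and $a\in A$,
\[
\sum_R(a_R)\sw1\ot h^R\sw 1\ot(a_R)\sw2\ot h^R\sw 2=\sum_{R,r}(a\sw1)_R\ot h\sw1^R\ot(a\sw2)_r\ot h\sw2^r,\qquad(\star)
\]
while left conormality reads $\sum_R\eps_A(a_R)\,h^R=\eps_A(a)\,h$. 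The governing idea is that $(\star)$ records how $R$ intertwines both coproducts at once: feeding a counit into an $A$-leg will produce \eqref{coalg1}, feeding a counit into an $H$-leg will produce \eqref{coalg3}, and left comultiplicativity is what remains once the $H$-coproduct has been forgotten.

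For \eqref{coalg1} I would first apply $\id_A\ot\id_H\ot\eps_A\ot\id_H$ to $(\star)$, that is, hit the second $A$-factor with $\eps_A$. On the left the counit axiom for $\Delta_A$ fuses $(a_R)\sw1$ and $(a_R)\sw2$ back into $a_R$, leaving $\sum_R a_R\ot h^R\sw 1\ot h^R\sw 2$; on the right, left conormality collapses the second copy of $R$ through $\sum_r\eps_A((a\sw2)_r)\,h\sw2^r=\eps_A(a\sw2)\,h\sw2$, after which the $A$-counit restores $a$ and yields $\sum_R a_R\ot h\sw1^R\ot h\sw2$. This is the first equality of \eqref{coalg1}. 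Hitting instead the first $A$-factor of $(\star)$ with $\eps_A$, the same two moves (now left conormality collapses the first copy of $R$) give $\sum_R a_R\ot h\sw1\ot h\sw2^R$, once one composes with the flip of the first two legs; this is the second equality.

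Equation \eqref{coalg3} then follows at once: applying $\eps_H$ to the middle $H$-leg of the first equality of \eqref{coalg1} and using the counit axiom for $\Delta_H$ to reassemble $h^R$ on one side gives $R(h\ot a)=\sum_R a_R\,\eps_H(h\sw1^R)\ot h\sw2$, and applying $\eps_H$ to the last $H$-leg of the second equality produces the companion formula $\sum_R a_R\ot h\sw1\,\eps_H(h\sw2^R)$.

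Finally, for left comultiplicativity I would write the identity to be shown as $\sum_R(a_R)\sw1\ot(a_R)\sw2\ot h^R=\sum_{R,r}(a\sw1)_R\ot(a\sw2)_r\ot(h^R)^r$. Applying $\eps_H$ to the first $H$-leg of $(\star)$ sends its left-hand side (via the $\Delta_H$ counit) to the left-hand side of this identity, and sends its right-hand side to $\sum\big[(\id_A\ot\eps_H)R(h\sw1\ot a\sw1)\big]\ot R(h\sw2\ot a\sw2)$. On the other hand, reading \eqref{coalg3} as the factorization $R(h\ot a)=\big[(\id_A\ot\eps_H)R(h\sw1\ot a)\big]\ot h\sw2$ and substituting it for the inner $R$ turns the right-hand side of the identity into exactly the same expression, which closes the argument. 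The main obstacle is this last step: the two independent $R$-actions, interleaved with the coproducts of both $h$ and $a$, must be bookkept so that each counit lands on the intended leg. Two points deserve emphasis throughout: it is \emph{left} conormality — right conormality is not assumed in this lemma — that licenses every collapse in \eqref{coalg1}, and it is \eqref{coalg3}, which exhibits $R$ as manufactured from its reduction $(\id_A\ot\eps_H)\circ R$, that linearizes the otherwise unwieldy doubly-applied $R$ appearing in comultiplicativity.
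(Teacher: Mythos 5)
Your proposal is correct and follows essentially the same route as the paper: you start from the comultiplicativity identity for the coalgebra map $R$, obtain \eqref{coalg1} by applying $\eps_A$ to each $A$-leg and invoking left conormality, deduce \eqref{coalg3} by further applying $\eps_H$, and then get left comultiplicativity by hitting the first $H$-leg with $\eps_H$ and reabsorbing the factor $\eps_H(h\sw 1^R)$ via \eqref{coalg3}. All steps check out; the only differences from the paper's proof are cosmetic (e.g.\ your explicit flip of tensor legs in the second equality of \eqref{coalg1}).
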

\begin{proof}
Equations \eqref{coalg1} follow by appying $\id_A \ot \id_H \ot \eps_A\ot \id_H$ or $\eps_A \ot \id_H \ot \id_A\ot \id_H$ to the formula expressing the comultiplictivity of $R$,  i.e.\ to
\begin{equation}\label{coalgmap}
\sum_R a_R\sw 1 \ot h^R\sw 1 \ot a_R\sw 2 \ot h^R\sw 2 = \sum_{R,r}  a\sw 1_R \ot h\sw 1^R\ot a\sw 2_r \ot h\sw 2^r,
\end{equation}
and by using the left conormality of $R$. Equations \eqref{coalg3} then follow from \eqref{coalg1} by applying $\id_A\ot \eps_H\ot \id_H$ and $\id_A\ot \id_H\ot \eps_H$. 

Finally, apply $\id_A\ot \eps_H \ot \id_A\ot\id_H$ to \eqref{coalgmap} and use \eqref{coalg3} to compute
$$
\sum_R a_R\sw 1\ot a_R\sw 2 \ot h^R = \sum_{R,r} a\sw 1_R\eps_H(h\sw 1^R) \ot a\sw 2_r \ot h\sw 2^r = \sum_{R,r} = a\sw 1_R\ot a\sw 2_r\ot h^{Rr}.
$$
Thus, $R$ is left comultiplicative as required.
\end{proof}

 \begin{lemma}\label{lem.quasimult}
 Let $H, A$ be Hopf quasigroups, $R: H\ot A\to A\ot H$ a $\k$-linear map. If $R$ is a left conormal coalgebra map, then:
 \begin{zlist}
 \item $R$ is right $S_H$-multiplicative if and only if, for all $a\in A$, $g,h\in H$,
 \begin{equation}\label{WRM}
\sum_{R,r}a_{Rr}\varepsilon_H (g^rS_H(h)^R)=\sum_R a_R\varepsilon_H ((g_HS(h))^R).
\end{equation}
 \item For all $a\in A$, $h\in H$, the conditions
 \begin{equation}\label{QRM}
\sum_{R,r} a_{Rr}\varepsilon_H (S_H(h_{(1)})^rh\sw{2}^R)=\varepsilon_H (h)a=\sum_{R,r}a_{Rr}\varepsilon_H (h\sw{1}^rS_H(h\sw{2})^R)
\end{equation}
are equivalent to 
\[\sum_{R,r} a_{Rr}\ot S_H(h_{(1)})^rh\sw{2}^R=a\ot h =\sum_{R,r}a_{Rr}\ot h\sw{1}^rS_H(h\sw{2})^R .
\]
\item If $R$ is right normal, then $R$ is right $S_H$-multiplicative and right $S_H$-conormal if and only if it satisfies \eqref{WRM} and \eqref{QRM}.
\end{zlist}
 \end{lemma}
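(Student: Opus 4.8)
The three parts share one mechanism furnished by Lemma~\ref{lemma.con-coa}. In every case the implication from the identity in $A\ot H$ to the scalar identity in $A$ is immediate: one composes with $\id_A\ot\eps_H$. For the converse I would exploit the reconstruction contained in the second equality of \eqref{coalg3}, namely $R(k\ot a)=\sum_R a_R\ot k\sw 1\,\eps_H(k\sw 2^R)$, which recovers the full value $R(k\ot a)$ from the counit-reduction $(\id_A\ot\eps_H)R(-\ot a)$ together with the coproduct of $k$. Thus a counit identity can be promoted to a full one provided the expression on each side is first brought into a form to which this reconstruction applies.

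For part~(1), the forward direction is precisely \eqref{SRM} composed with $\id_A\ot\eps_H$, which is \eqref{WRM}. For the converse I would put $k=gS_H(h)$; since $\Delta_H$ is multiplicative and $S_H$ anticomultiplicative, $\Delta_H(gS_H(h))=g\sw 1 S_H(h\sw 2)\ot g\sw 2 S_H(h\sw 1)$, so the reconstruction rewrites the left-hand side of \eqref{SRM} as $\sum_R a_R\ot g\sw 1 S_H(h\sw 2)\,\eps_H\big((g\sw 2 S_H(h\sw 1))^R\big)$. Replacing the counit factor by the right-hand side of \eqref{WRM} (with $g\sw 2, h\sw 1$ in place of $g,h$) and then reassembling the product $g^r S_H(h)^R$ by means of the left comultiplicativity of $R$ established in Lemma~\ref{lemma.con-coa} and coassociativity yields the right-hand side of \eqref{SRM}.

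Part~(2) follows the same pattern applied separately to the two symmetric expressions $\sum_{R,r}a_{Rr}\ot S_H(h\sw 1)^r h\sw 2^R$ and $\sum_{R,r}a_{Rr}\ot h\sw 1^r S_H(h\sw 2)^R$. The new difficulty is that these are products in the $H$-factor rather than single applications of $R$, so before the reconstruction can be invoked one must first use \eqref{coalg1} to transfer each of the two twists onto a single Sweedler leg of the relevant coproduct and then use coassociativity to recombine; only after this normalisation does \eqref{QRM} apply leg by leg. I expect this product-reconstruction to be the main obstacle, since it requires careful bookkeeping of the two independent summation indices $R$ and $r$ and of which component of $\Delta_H(h)$ each one acts upon.

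Finally, part~(3) I would deduce from parts~(1) and~(2). By~(1) right $S_H$-multiplicativity is equivalent to \eqref{WRM}, so it remains to show that, in the presence of right normality and right $S_H$-multiplicativity, right $S_H$-conormality is equivalent to \eqref{QRM}. In components right $S_H$-conormality reads $\sum_{R,r}a_{Rr}\,\eps_H\big(S_H(h^R)^r\big)=\eps_H(h)a$; using \eqref{coalg3} to split $h^R$ along $\Delta_H$, the anticomultiplicativity of $S_H$, and right $S_H$-multiplicativity to reshape the twisted product into the shape occurring in \eqref{QRM}, one converts this into \eqref{QRM}, the antipode identities \eqref{quasi1}--\eqref{quasi2} together with right normality being used at the end to absorb the spurious unit contributions. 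Combining this with parts~(1) and~(2) gives the stated equivalence.
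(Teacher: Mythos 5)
Your parts (1) and (3) are, in substance, the paper's own proofs. In (1) the paper runs the identical reconstruction, only in mirror image (it uses the first equality in \eqref{coalg3} where you use the second), and then reassembles $g^rS_H(h)^R$ by two further applications of \eqref{coalg3}; note that the reassembly is done by \eqref{coalg3} itself, not by the left comultiplicativity of $R$ that you invoke --- that condition concerns $\Delta_A$ and is irrelevant to manipulating the $H$-legs. In (3) your plan (dispose of \eqref{WRM} via part (1); identify right $S_H$-conormality with the first equation of \eqref{QRM} by splitting $h^R$ with \eqref{coalg3} and counit multiplicativity; get the second equation of \eqref{QRM} from \eqref{WRM}, right normality and the antipode property) is exactly the paper's argument. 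One structural remark: the paper's proof of (3) makes no use of part (2), and yours should not either, since (3) lives entirely at the counit level.

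Part (2) is where your proposal has a genuine gap, located precisely at the step you wave through (``only after this normalisation does \eqref{QRM} apply leg by leg''). Carry your own normalisation out. By \eqref{coalg1},
\[
\sum_{R,r}a_{Rr}\ot S_H(h\sw 1)^rh\sw 2^R=\sum_{R,r}a_{Rr}\ot S_H(h\sw 1^R)^rh\sw 2 ,
\]
and the first equation of \eqref{QRM} --- which, by \eqref{coalg3} and counit multiplicativity, is the same as right $S_H$-conormality --- forces $\sum_{R,r}a_{Rr}\ot S_H(k^R)^r=a\ot S_H(k)$ (split the outer twist with \eqref{coalg3}, move the inner one with \eqref{coalg1}, then apply the counit identity). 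Hence the left-hand side collapses to $a\ot S_H(h\sw 1)h\sw 2=\eps_H(h)\,a\ot 1_H$ by the antipode property, \emph{not} to $a\ot h$. In fact the displayed ``full'' identities with middle term $a\ot h$ are not equivalent to \eqref{QRM} at all: take $R=\flip$, which satisfies every hypothesis of the lemma (it is a left conormal coalgebra map, indeed normal and left multiplicative as well). Then \eqref{QRM} holds for any $H$, because $S_H(h\sw 1)h\sw 2=\eps_H(h)1_H=h\sw 1S_H(h\sw 2)$, whereas $\sum_{R,r}a_{Rr}\ot S_H(h\sw 1)^rh\sw 2^R=a\ot S_H(h\sw 1)h\sw 2=\eps_H(h)\,a\ot 1_H\neq a\ot h$ whenever $h\neq\eps_H(h)1_H$. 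So the equivalence in (2) can only be true if the middle term of the second display is read as $\eps_H(h)\,a\ot 1_H$ --- which is what your reconstruction, and the paper's hint of ``repetitive use of \eqref{coalg3}'', actually produce; with the right-hand side $a\ot h$ as printed, the implication from \eqref{QRM} to the full identities is false, and no bookkeeping of the indices $R$, $r$ will rescue it. Since the paper leaves (2) to the reader, this is exactly the point a complete proof has to confront, and your sketch asserts the conclusion instead of confronting it.
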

 \begin{proof}
 (1) Obviously, the right $S_H$-multiplicativity of $R$ implies  \eqref{WRM}. Conversely, the right $S_H$-multiplicativity  of $R$ can be inferred from \eqref{WRM} by repetitive use of equations \eqref{coalg3} in Lemma~\ref{lemma.con-coa}:
 \begin{align*}
 \sum_{R} a_{R} \ot  (gS_H(h))^R & \stackrel{\eqref{coalg3}}{=} \sum_R a_R \eps_H((gS_H(h))\sw 1^R) \ot (gS_H(h))\sw 2 \\
 &= \sum_R a_R \eps_H((g\sw 1S_H(h\sw 2))^R) \ot g\sw 2S_H(h\sw 1)\\
 & \stackrel{\eqref{WRM}}{=} \sum_{R,r} a_{Rr} \eps_H(g\sw 1^r)\eps_H(S_H(h\sw 2)^R) \ot g\sw 2S_H(h\sw 1)\\
 & \stackrel{\eqref{coalg3}}{=} \sum_{R,r} a_{Rr} \eps_H(g\sw 1^r)\ot g\sw 2S_H(h)^R \stackrel{\eqref{coalg3}}{=} \sum_{R,r} a_{Rr} \ot g^rS_H(h)^R ,
 \end{align*}
 as required. 
 
 The second statement  is proven by a similar repetitive use of equations \eqref{coalg3} in Lemma~\ref{lemma.con-coa}, and the proof is left to the reader. To prove (3), if $R$ is right normal and right $S_H$-multiplicative, then
\begin{align*}
 \sum_{R,r}a_{Rr}\varepsilon_H  (h\sw{1}^rS_H(h\sw{2})^R)  \stackrel{\eqref{WRM}}{=} & \sum_R a_R \eps_H((h\sw 1 S_H(h\sw 2))^R)\\
  = &\sum_R a_R\eps_H(h)\eps_H({1_H}^R) = a\eps_H(h),
 \end{align*}
 so the second of equations \eqref{QRM} is automatically satisfied. Now, we need to use the multiplicativity of the counit and \eqref{coalg3} in Lemma~\ref{lemma.con-coa} to compute
 \begin{align*}
 \sum_{R,r} a_{Rr}\varepsilon_H (S_H(h_{(1)})^rh\sw{2}^R) & = \sum_{R,r} a_{Rr} \eps_H(h\sw 2^R)\eps_H(S_H(h_{(1)})^r) = \sum_{R,r} a_{Rr}\eps_H(S_H(h^R)^r).
 \end{align*}
 Hence, the first of equations \eqref{QRM} is equivalent to right $S_H$-conormality of $R$.
 \end{proof}

\begin{lemma}\label{def.RSA}
Let $H$ and $A$ be Hopf quasigroups, $R: H\ot A\to A\ot H$ a left normal and left multiplicative map. If $R$ is also a coalgebra map and is left conormal, then
\begin{equation}\label{RSA}
R\circ (\id_H \ot S_A) = (S_A\ot \id_H)\circ R. 
\end{equation} 
Furthermore, the first of equalities \eqref{QRM} implies that
\begin{equation}\label{S.cae}
R \circ \flip \circ (S_A\ot S_H)\circ R \circ \flip = S_A \ot S_H.
\end{equation}
\end{lemma}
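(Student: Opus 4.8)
The plan is to establish the two claimed identities separately, since they rest on different hypotheses. For \eqref{RSA}, the strategy is to use the Hopf quasigroup antipode identities for $A$ together with left multiplicativity and normality of $R$ in a ``bootstrap'' computation: I would write $S_A(a)$ as something that can be produced by left division in $A$, and then push $R$ through products using left multiplicativity. Concretely, one expects to exploit that $S_A$ is characterized by the antipode property $a\sw 1(S_A(a\sw 2)b) = \eps_A(a)b$ and its mirror form. The natural move is to compute $R(h\ot S_A(a))$ by inserting a trivially-counital factor and converting an expression of the form $S_A(a\sw 1)(a\sw 2\,c)$ into $\eps_A(a)c$ \emph{after} applying $R$; left multiplicativity \eqref{LM} lets $R$ distribute over the product $a\sw 2\,c$, and left normality together with left conormality (hence Lemma~\ref{lemma.con-coa}, giving left comultiplicativity and the formulas \eqref{coalg1}--\eqref{coalg3}) supplies the bookkeeping needed to collapse the Sweedler indices back. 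The upshot should be that $R$ intertwines $\id_H\ot S_A$ with $S_A\ot\id_H$.

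For the second identity \eqref{S.cae}, the plan is to unfold both sides on elements and reduce the claim to an application of \eqref{RSA} followed by the first equality in \eqref{QRM}. Writing $R(h\ot a)=\sum_R a_R\ot h^R$, the left-hand side of \eqref{S.cae} applied to $a\ot h$ is $R\bigl(\flip(S_A\ot S_H)R(\flip(a\ot h))\bigr)$; after the first $\flip$ this is $R$ applied to $h\ot a$, so I would first compute the inner $\sum_R a_R\ot h^R$, then apply $(S_A\ot S_H)$ and $\flip$ to get $\sum_R S_H(h^R)\ot S_A(a_R)$, and finally apply the outer $R$. The hope is that \eqref{RSA} can be used to move $S_A$ across $R$ so that the $S_A$'s cancel via the Hopf quasigroup structure, while the coassociativity-flavoured identity \eqref{QRM} (in its first, counital-free incarnation as stated in the displayed equation of Lemma~\ref{lem.quasimult}(2)) is exactly what collapses the double-$R$ composite $R\circ\flip\circ(\cdots)\circ R$ back to the identity on the $H$-leg, leaving $S_A\ot S_H$.

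The main obstacle I anticipate is the direction of the antipode antihomomorphism property and keeping the two ``sides'' ($S_H$ acting on the first tensor factor versus $S_A$ on the second) correctly aligned through the two flips in \eqref{S.cae}. Because $H$ and $A$ are only Hopf \emph{quasi}groups, I cannot freely reassociate: every cancellation of an $S$ against its argument must be engineered to appear in precisely one of the four permitted bracketings \eqref{quasi1}--\eqref{quasi2}. In particular, the passage that uses \eqref{QRM} requires the factors to line up as $S_H(h\sw 1)^r\,h\sw 2^R$ (or its mirror), and producing exactly this pattern from the raw composite—rather than some reassociated cousin that the quasigroup axioms do not license—is the delicate point. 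I would therefore carry out the computation in Sweedler notation with explicit $R$- and $r$-labels, invoking \eqref{RSA} to convert $S_A$-on-the-right into $S_A$-on-the-left at the moment it is needed, and appealing to Lemma~\ref{lem.quasimult}(2) only when the $H$-factors are already in the admissible bracketing.
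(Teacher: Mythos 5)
Your overall strategy coincides with the paper's (expand via the quasigroup identity \eqref{quasi1} in $A$, push $R$ around with \eqref{LM} and the bookkeeping of Lemma~\ref{lemma.con-coa} for \eqref{RSA}; unfold, apply \eqref{RSA}, line up the factors, and finish with \eqref{QRM} for \eqref{S.cae}), but your plan for \eqref{RSA} has a genuine gap at its central step. You propose to write $S_A(a)=S_A(a\sw 1)(a\sw 2 S_A(a\sw 3))$, distribute $R$ through the products by left multiplicativity, and then convert $S_A(a\sw 1)(a\sw 2\,c)$ into $\eps_A(a)c$ \emph{after} applying $R$. The trouble is that \eqref{LM} puts every $R$-index \emph{outside} the antipode: you obtain $\sum_{R,\bar{R},r} S_A(a\sw 1)_R\,(a\sw 2_{\bar{R}}\,S_A(a\sw 3)_r)\ot h^{R\bar{R}r}$, and \eqref{quasi1} cannot be applied to this, because $S_A(a\sw 1)_R$ and $a\sw 2_{\bar{R}}$ are not of the form $S_A(x\sw 1)$ and $x\sw 2$ for a common element $x$; the recombination supplied by left comultiplicativity \eqref{coalgmap} only works when the indices sit \emph{inside}, as in $S_A(a\sw 1_R)$. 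Moving an index across $S_A$, i.e.\ $S_A(a\sw 1)_R=S_A(a\sw 1_R)$, is exactly the identity \eqref{RSA} you are trying to prove, so the literal execution of your plan is circular. (Starting instead from $\sum_R S_A(a_R)\ot h^R$ and using comultiplicativity puts all indices inside, and one is stuck for the dual reason.)

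The paper bridges the two index placements with an intermediate expression that is \emph{mixed}: $\sum_{R,\bar{R},r} S_A(a\sw 1_R)\,(a\sw 2_{\bar{R}}\,S_A(a\sw 3)_r)\ot h^{R\bar{R}r}$, with the index inside $S_A$ on the first factor and outside on the third. This equals $\sum_R S_A(a_R)\ot h^R$ by combining $\bar{R}$ and $r$ via \eqref{LM} read backwards and then using the antipode property $a\sw 1\ot a\sw 2 S_A(a\sw 3)=a\ot 1_A$ together with \emph{left normality} $R(h\ot 1_A)=1_A\ot h$ --- note this collapse uses the antipode property and normality, not \eqref{quasi1}; and it equals $\sum_r S_A(a)_r\ot h^r$ by combining $R$ and $\bar{R}$ via \eqref{coalgmap} and then applying \eqref{quasi1} and left conormality. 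Your sketch names all the hypotheses but never isolates this mixed-index device, and without it the inside-index and outside-index computations do not meet. By contrast, your outline for \eqref{S.cae} is essentially the paper's argument and would go through once \eqref{RSA} is actually established: after \eqref{RSA} gives $\sum_{R,r}S_A(a_{Rr})\ot S_H(h^R)^r$, the admissible pattern for \eqref{QRM} is produced by applying \eqref{coalg3} twice together with anticomultiplicativity of $S_H$, yielding $\sum_{R,r}S_A(a_{Rr})\eps_H(S_H(h\sw 2)^r h\sw 3^R)\ot S_H(h\sw 1)$, to which the first equality of \eqref{QRM} applies.
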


\proof Take any $a\in A$ and $h\in H$. Then, using the left multiplicativity and left conormality of $R$ to make a start and to finish, we can compute
\begin{align*}
\sum_R S_A(a_R) \ot h^R &= \sum_{R,\bar{R}, r} S_A(a\sw 1_R)(a\sw 2 _{\bar{R}}S_A(a\sw 3)_r)\ot h^{R\bar{R}r}\\
& \stackrel{\eqref{coalgmap}}{=} \sum_{R, r} S_A(a\sw 1_R\sw 1)(a\sw 1 _{R}\sw 2S_A(a\sw 2)_r)\ot h^{Rr}
 \stackrel{\eqref{quasi1}}{=} \sum_r S_A(a)_r \ot h^r.
 \end{align*} 
This proves equality \eqref{RSA}.

The second assertion is proven by the following calculation, for all $a\in A$, $h\in H$,
\begin{align*}
\sum_{R, r} S_A(a_R)_r& \ot S_H(h^R)^r  \stackrel{\eqref{RSA}}{=}\sum_{R,r}S_A(a_{Rr}) \ot S_H(h^R)^r \\
& \stackrel{\eqref{coalg3}}{=}\sum_{R,r}S_A(a_{Rr})\eps_H(S_H(h\sw 1)\sw 1^r h\sw 2^R) \ot S_H(h\sw 1)\sw 2 \\
&= \sum_{R,r}S_A(a_{Rr})\eps_H(S_H(h\sw 2)^r h\sw 3^R) \ot S_H(h\sw 1) \stackrel{\eqref{QRM}}{=} S_A(a)\ot S_H(h).
\end{align*}
Thus the equality \eqref{S.cae} holds as required.
 \endproof

{\sl Proof of Theorem~\ref{def.RsmashP}.} 
 $(2)\Rightarrow (1)$ The normality of $R$ immediately implies  that $1_A\ot 1_H$ is the unit of $A\lsmash_R  H$. The left counitality of $R$ together with the fact that a counit of a Hopf quasigroup  is an algebra map imply that also the counit 
 $\varepsilon_A\ot \varepsilon_H$ of $A\lsmash_RH$  is an algebra homomorphism. The coproduct  $\Delta$  of  $A\lsmash_R  H$ is obviously unital, and it is multiplicative since $R$ is a coalgebra morphism. This part of the proof is not different from the standard Hopf algebra case; see \cite{CaeIon:Sma}.
It remains to check the Hopf quasigroup identities \eqref{quasi1} and \eqref{quasi2}, which is done by explicit calculations. 

 For all $a,b\in A$ and $g,h\in H$, 
\begin{align*}
S((a\ot h)_{(1)}) ((a\ot h)_{(2)}(b\ot g))
& \stackrel{\eqref{Rsmashanti},\eqref{Rsmashmulti}}{=}
\sum_{{\bar R}, R,r} {S_A(a_{(1)})_{R}}(a_{(2)}{b_r})_{\bar R}\ot S_H(h_{(1)})^{R{\bar R}}(h\sw{2}^rg)\\
&\stackrel{\eqref{LM}}{=}\sum_{R,r} (S_A(a_{(1)})(a_{(2)}{b_r}))_R\ot S_H(h_{(1)})^{R}(h\sw{2}^rg)\\
&\stackrel{\eqref{quasi1}}{=}\sum_{R,r} \eps_A(a)b_{rR}\ot S_H(h_{(1)})^{R}(h\sw{2}^rg)\\
&\stackrel{\eqref{coalg3}}{=}\sum_{R,r} \eps_A(a)b_{rR}\eps_H(S_H(h_{(2)})^Rh\sw{3}^r)\ot S_H(h_{(1)})(h_{(4)}g)\\
&\stackrel{\eqref{QRM}}{=}\varepsilon_A (a)b\ot S_H(h_{(1)})(h_{(2)}g)
\stackrel{\eqref{quasi1}}{=}\varepsilon_A (a)\varepsilon_H (h)b\ot g.
\end{align*}
This proves the first of equations \eqref{quasi1}. Next
\begin{align*}
(a\ot h)_{(1)}(S((a\ot & h)_{(2)})(b\ot g))
\stackrel{\eqref{Rsmashanti},\eqref{LM}}{=}
\sum_{R,r,{\bar R}} a_{(1)}(S_A(a\sw 2)_Rb_r)_{\bar R} \ot h\sw 1^{\bar R} (S_H(h\sw 2)^{Rr}g)\\
&\stackrel{\eqref{LM}}{=} 
\sum_{R,{\bar R}} a_{(1)}(S_A(a\sw 2)b)_{R{\bar R}} \ot h\sw 1^{\bar R} (S_H(h\sw 2)^{R}g)\\
&\stackrel{\eqref{coalg3}}{=}
\sum_{R,{\bar R}} a_{(1)}(S_A(a\sw 2)b)_{R{\bar R}} \eps_H(h\sw 1^{\bar R} (S_H(h\sw 4)^{R}) \ot h\sw 2 (S_H(h\sw 3)g)\\
&\stackrel{\eqref{quasi1}}{=} \sum_{R,{\bar R}} a_{(1)}(S_A(a\sw 2)b)_{R{\bar R}} \eps_H(h\sw 1^{\bar R} (S_H(h\sw 2)^{R}) \ot g\\
&\stackrel{\eqref{QRM}}{=}\varepsilon_H (h)a_{(1)}(S_A(a_{(2)})b)\ot g
\stackrel{\eqref{quasi1}}{=}\varepsilon_A (a)\varepsilon_H (h)b\ot g,
\end{align*}
where also the normality was used to derive the penultimate euqality. This proves the second of relations \eqref{quasi1}. It is the proof of  \eqref{quasi2} where the right $S_H$-multiplicativity of $R$, \eqref{SRM},  is used. The first of identities  \eqref{quasi2}  is proven by the following calculation
\begin{align*}
((b\ot g)(a\ot h)_{(1)})S((a\ot &h)_{(2)})
\stackrel{\eqref{Rsmashmulti},\eqref{Rsmashanti}}{=}
\sum_{R,r,{\bar R}} (ba\sw 1_R)S_A(a\sw 2)_{r{\bar R}} \ot(g^Rh\sw 1)^{\bar R}S_H(h\sw 2)^r\\
&\stackrel{\eqref{SRM}}{=}\sum_{R,r} (ba\sw 1_R)S_A(a\sw 2)_{r} \ot((g^Rh\sw 1)S_H(h\sw 2))^r\\
&\stackrel{\eqref{quasi2}}{=}
\sum_{R,r} \eps_H(h) (ba\sw 1_R)S_A(a\sw 2)_{r} \ot g^{Rr}\\
&\stackrel{\eqref{RSA}}{=}
\sum_{R,r} \eps_H(h) (ba\sw 1_R)S_A(a\sw 2_{r}) \ot g^{Rr}\\
&\stackrel{}{=}\sum_{R} \eps_H(h) (ba_R\sw 1)S_A(a_R\sw 2) \ot g^{R}
\stackrel{\eqref{quasi2}}{=}\varepsilon_A (a)\varepsilon_H (h)b\ot g.
\end{align*}
The penultimate equality  is a consequence of left comultiplicativity of $R$ which is asserted by Lemma~\ref{lemma.con-coa}. In derivation of the last equality, the left conormality of $R$ was also used.
Finally, and again using Lemma~\ref{lemma.con-coa} in the penultimate equality and left conormality of $R$ in the last one, we compute
\begin{align*}
((b\ot g)S((a\ot h)_{(1)}))(a\ot & h)_{(2)}
\stackrel{\eqref{Rsmashanti},\eqref{Rsmashmulti}}{=} 
\sum_{R,r,{\bar R}} (b S_A(a\sw 1)_{Rr})a\sw 2_{\bar R} \ot (g^r S_H(h\sw 1)^R)^{\bar R}h\sw 2\\
&\stackrel{\eqref{SRM}}{=} 
\sum_{R,{\bar R}} (b S_A(a\sw 1)_{R})a\sw 2_{\bar R} \ot (g S_H(h\sw 1))^{R\bar R}h\sw 2\\
&\stackrel{\eqref{RSA}}{=} 
\sum_{R,{\bar R}} (b S_A(a\sw 1_{R}))a\sw 2_{\bar R} \ot (g S_H(h\sw 1))^{R\bar R}h\sw 2\\
&\stackrel{}{=} 
\sum_{R} (b S_A(a_R\sw 1))a_R\sw 2 \ot (g S_H(h\sw 1))^{R}h\sw 2
\stackrel{\eqref{quasi2}}{=} \varepsilon_A (a)\varepsilon_H (h)b\ot g.
\end{align*}
This completes the proof that  $A\lsmash_R  H$ is a Hopf quasigroup as required.

 $(1)\Rightarrow (2)$
 The fact that $1_A\ot 1_H$ is the unit of the $R$-smash product Hopf quasigroup $A\lsmash_R  H$ immediately implies the normality of $R$.
 The equalities, for all $a\in A$, $h\in H$,
$$
\Delta ((1_A\ot h)(a\ot 1_H))=\Delta (1_A\ot h)\Delta (a\ot 1_H), \quad \eps((1_A\ot h)(a\ot 1_H))=\eps(1_A\ot h)\eps (a\ot 1_H),
$$
resulting from the multiplicativity of coproduct and counit imply that $R$ is a coalgebra map. This is no different from the standard Hopf algebra case.

Developing  the second of the Hopf quasigroup conditions \eqref{quasi2} for $A\lsmash_R  H$ as in the first part of the proof of the theorem, and using Lemma~\ref{def.RSA} and \eqref{coalg3} repeatedly, we arrive at the following equality:
$$
\sum_{R,r,{\bar R}} (b S_A(a\sw 1_{Rr}))a\sw 2_{\bar R}\eps_H(g\sw 1^r S_H(h\sw 1)\sw 1^R) \ot (g\sw 2 S_H(h\sw 1)\sw 2)^{\bar R}h\sw 2 = \eps_A(a)\eps_H(h) b\ot g.
$$
Apply $\id_A \ot \varepsilon_H$ to both sides of this equation, set $b=\sum_{{\hat R}, {\hat r}}a\sw 1_{{\hat R} {\hat r}}\eps_H(g\sw 1^{\hat r} S_H(h)\sw 1)^{\hat R})$,  and shift Sweedler's indices as required to obtain:
\begin{align*}
\sum_{R,r,{\bar R},{\hat R}, {\hat r}} ( a\sw 1_{{\hat R} {\hat r}}\eps_H(g\sw 1^{\hat r} S_H(h)\sw 1^{\hat R}) S_A(a\sw 2_{Rr}))a\sw 3_{\bar R}\eps_H(g\sw 2^r &S_H(h)\sw 2^R)  \eps_H( (g\sw 3 S_H(h)\sw 3)^{\bar R})\\
& = \sum_{{\hat R}, {\hat r}} a_{{\hat R} {\hat r}}\eps_H(g^{\hat r} S_H(h))^{\hat R}).
\end{align*}
The fact that $R$ is a  coalgebra map, equation~\eqref{coalgmap}, implies 
\begin{align*}
\sum_{R,r,{\bar R}} ( a\sw 1_{{R} {r}}\sw 1 S_A(a\sw 1_{Rr}\sw 2))a\sw 2_{\bar R}\eps_H(g\sw 1^r S_H(h)\sw 1^R) & \eps_H((g\sw 2 S_H(h)\sw 2)^{\bar R})\\
& = \sum_{{\hat R}, {\hat r}} a_{{\hat R} {\hat r}}\eps_H(g^{\hat r} S_H(h))^{\hat R}).
\end{align*}
Finally,  the antipode property combined with the left conormality  of $R$ yield equation \eqref{WRM}. Therefore, $R$ is right $S_H$-multiplicative by Lemma~\ref{lem.quasimult}.

Finally, making the same steps in the proof of the first Hopf quasigroup  identity for $A\lsmash_R  H$ as in the first part of the proof of the theorem, we conclude that the first of conditions \eqref{quasi1} imply
$$
\sum_{R,r} \eps_A(a)b_{rR}\eps_H(S_H(h_{(2)})^Rh\sw{3}^r)\ot S_H(h_{(1)})(h_{(4)}g) = \eps_A(a)\eps_H(h) b\ot g.
$$ 
Applying $\id_A\ot \eps_H$ and evaluating this identity at $a=1_A$ and $g=1_H$ one immediately obtains  the first of equations \eqref{QRM}. In view of Lemma~\ref{lem.quasimult} this is tantamount to right $S_H$-conormality of $R$.
\endproof

\begin{remark} \label{rem.cae}
In \cite[Corollary~4.6]{CaeIon:Sma}, which is a predecessor of Theorem~\ref{def.RsmashP}, it is assumed that the $R$ is compatible with the antipodes so that the equality \eqref{S.cae} is satisfied. As explained in Lemma~\ref{def.RSA} this follows from other hypotheses made in Theorem~\ref{def.RsmashP}, most notably from right $S_H$- and left conormality of $R$, which are not assumed in \cite{CaeIon:Sma}.
\end{remark}

\begin{example}
Let $H$ and $A$ be Hopf quasigroups. Recall from \cite{BrzJiao:Act}  that  $A$ is a {\em left $H$-quasimodule Hopf quasigroup}, if 
\begin{blist}
\item $A$ is a left $H$-quasimodule, i.e.\ there exists a $\k$-linear map $H\ot M\rightarrow M$, $h\ot m \mapsto h\act m$, such that, for all $a\in M$ and $h\in H$,
$$
1_H\act m=m, \qquad 
h_{(1)}\act (S_H(h_{(2)})\act m)=\varepsilon_H (h)m=S_H(h_{(1)})\act (h_{(2)}\act m);
$$
\item the $H$-action satisfies the following compatibility conditions:
$$
(h_{(1)}\act a)(h_{(2)}\act b)=h\act(ab),  \qquad h\act 1_A=\varepsilon_H (h)1_A,
$$
$$
\Delta_A (h\act a)=h_{(1)}\act a_{(1)}\ot h_{(2)}\act a_{(2)}, \qquad \varepsilon_A (h\act a)=\varepsilon_H (h)\varepsilon_A (a),
$$
for all $h\in H, a, b \in A$. 
\end{blist}
Let $A$ be a left $H$-quasimodule Hopf quasigroup such that,  for all $g,h\in H$ and $a\in A$, 
\begin{equation}\label{quasi.ass}
h_{(1)}\ot h_{(2)}\act a=h_{(2)}\ot h_{(1)}\act a, \qquad g\act (S_H(h)\act a) = (gS_H(h))\act a .
\end{equation}
Then the map 
$$
R: H\ot A\to H\ot A, \qquad h\ot a\mapsto h\sw 1\act a\ot h\sw 2,
$$
is a coalgebra map which is left multiplicative, normal, left conormal, right $S_H$-multi\-plicative and right $S_H$-conormal. Consequently, there is an $R$-smash product Hopf quasigroup for $H$ and $A$, $A\lsmash_R  H$. $A\lsmash_R  H$ coincides with the smash product $A\lsmash  H$ described in \cite{BrzJiao:Act}.

That $R$ satisfies all assumptions of Theorem~\ref{def.RsmashP} can be checked by straightforward calculations which are left to the reader. We only mention in passing that for the right $S_H$-conormality of $R$ both equations \eqref{quasi.ass} are needed.
\end{example}

Dually to a Hopf quasigroup, an algebra and coalgebra $H$ with coproduct and counit that are algebra maps is called a {\em Hopf coquasigroup} provided the product  is associative and there exists a linear map $S_H: H\to H$ such that, for all $h\in H$,
$$
S_H(h\sw 1)h\sw 2\sw 1\ot h\sw 2\sw 2   = 1_H\ot h = h\sw 1S_H(h\sw 2\sw 1)\ot h\sw 2\sw 2
$$
and
$$
h\sw 1\sw 1 \ot h\sw 1\sw 2 S_H(h\sw 2)  = h\ot 1_H = h\sw 1\sw 1 \ot S_H(h\sw 1\sw 2) h\sw 2.
$$
When written in terms of commutative diagrams, the definitions of a Hopf quasigroup and Hopf coquasigroup are formally dual to each other in the sense that one is obtained by reversing all the arrows in the definition of the other. Thus the theory of $R$-smash coproducts for Hopf coquasigroups can be obtained by dualising the theory of $R$-smash products for Hopf quasigroups. By this means one can first state

\begin{definition}\label{def.Wsmash} 
Let $H$ and $A$ be Hopf coquasigroups and let $W: H\ot A\to A\ot H$ be a $\k$-linear map. By a {\em $W$-smash coproduct} of $H$ and $A$ we mean a Hopf coquasigroup  $H{}_W\rcosmash  A$ that is equal to $H\ot A$ as a vector space with tensor product unit, multiplication and counit, and with comultiplication and antipode 
$$
\Delta = (\id_H\ot W\ot \id_A)\circ (\Delta_H\ot \Delta_A), \qquad
S=\flip \circ (S_A \ot S_H)\circ W.
$$
\end{definition}

Then, dualising Theorem~\ref{def.RsmashP}, we obtain the following Hopf coquasigroup version of \cite[Corollary~4.8]{CaeIon:Sma}
\begin{theorem}
Let $H, A$ be Hopf coquasigroups, $W: H\ot A\to A\ot H$ a $\k$-linear map. If $W$ is left comultiplicative and left normal, then the following statements are equivalent:
\begin{zlist}
\item
$H_W\rcosmash  A$ is a $W$-smash coproduct Hopf coquasigroup of $H$ and $A$;
\item 
The map $W$ is an algebra map that is conormal, right $S_H$-comultiplicative and right $S_H$-normal.
\end{zlist}
\end{theorem}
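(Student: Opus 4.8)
The plan is to carry out the dualisation announced in the paragraph preceding Definition~\ref{def.Wsmash}, converting the proof of Theorem~\ref{def.RsmashP} into a proof of the present statement by reversing all arrows. A Hopf coquasigroup in $\vect$ is exactly a Hopf quasigroup read in the opposite symmetric monoidal category $\vect^{\mathrm{op}}$: arrow reversal exchanges $\mu_H\leftrightarrow\Delta_H$ and $1_H\leftrightarrow\eps_H$ (and similarly for $A$), fixes the antipode $S_H$ and the symmetry $\flip$, and turns the Hopf quasigroup identities \eqref{quasi1}--\eqref{quasi2} into the Hopf coquasigroup identities displayed above. Every step in the proofs of Lemmata~\ref{lemma.con-coa}--\ref{def.RSA} and of Theorem~\ref{def.RsmashP} is assembled solely from composition, tensor product and the structure morphisms, i.e.\ is a valid identity of morphisms in an arbitrary symmetric monoidal category; hence each of those results holds verbatim in $\vect^{\mathrm{op}}$, and applying Theorem~\ref{def.RsmashP} there is precisely what yields the desired equivalence.

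First I would fix the dictionary relating the two pictures. The map $W\colon H\ot A\to A\ot H$ corresponds to the arrow-reversal of $R$, conjugated by the symmetries $\flip$ needed to reconcile the underlying spaces $A\ot H$ and $H\ot A$ of the two constructions. Under reversal the named conditions of Definition~\ref{def.allterms} pair up exactly as: left multiplicative $\leftrightarrow$ left comultiplicative, left conormal $\leftrightarrow$ left normal, coalgebra map $\leftrightarrow$ algebra map, normal $\leftrightarrow$ conormal, right $S_H$-multiplicative $\leftrightarrow$ right $S_H$-comultiplicative, and right $S_H$-conormal $\leftrightarrow$ right $S_H$-normal. One should also verify that the $R$-smash product data of Definition~\ref{def.Rsmash}, the multiplication \eqref{Rsmashmulti} and antipode \eqref{Rsmashanti}, reverse precisely to the comultiplication and antipode of the $W$-smash coproduct of Definition~\ref{def.Wsmash}, and that the tensor product unit and counit of $A\lsmash_R H$ reverse to the tensor product counit and unit of $H{}_W\rcosmash A$. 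Granting this dictionary, the hypotheses ``left comultiplicative and left normal'' are the reverses of the hypotheses of Theorem~\ref{def.RsmashP}, while statements (1) and (2) for $W$ are the reverses of statements (1) and (2) for $R$, so the equivalence transports.

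As an intermediate checkpoint I would dualise the three lemmata first, since the main argument rests on them: Lemma~\ref{lemma.con-coa} becomes the assertion that a left normal algebra map $W$ is left comultiplicative; Lemma~\ref{lem.quasimult} becomes the co-version relating right $S_H$-comultiplicativity to right $S_H$-normality, through the reverses of \eqref{WRM} and \eqref{QRM}; and Lemma~\ref{def.RSA} becomes the compatibility of $W$ with the antipodes together with the reverse of \eqref{S.cae}. With these established, the two implications of Theorem~\ref{def.RsmashP} dualise line by line, each displayed computation becoming its mirror image, with the quasigroup identities replaced by their coquasigroup counterparts and \eqref{coalg3} replaced by its dual.

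The main obstacle is bookkeeping rather than mathematics: one must track the symmetries $\flip$ and the ordering of tensor factors ($A\ot H$ versus $H\ot A$) carefully, so that the reversed map really is a map $H\ot A\to A\ot H$, so that the reversal of each named condition is the identically named co-condition, and so that the antipodes land on the correct tensor legs. A point to keep in mind is that, since $H$ and $A$ are not assumed finite dimensional, the duality invoked here is the formal, diagrammatic reversal of arrows (equivalently, passage to $\vect^{\mathrm{op}}$) and not the passage to linear duals $H^{*},A^{*}$; it is the self-duality of the symmetric monoidal structure that legitimises transporting the diagrammatic proof. A reader preferring an element-wise argument may instead mirror each displayed computation in the proof of Theorem~\ref{def.RsmashP} directly, the content being identical.
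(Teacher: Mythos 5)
Your proposal is correct and is exactly the paper's own argument: the paper proves this theorem purely by formal dualisation of Theorem~\ref{def.RsmashP}, invoking the arrow-reversal duality between Hopf quasigroups and Hopf coquasigroups, just as you do. Your additional care about the dictionary of named conditions, the tensor-factor bookkeeping, and the remark that the duality is diagrammatic (passage to $\vect^{\mathrm{op}}$) rather than via linear duals only makes explicit what the paper leaves implicit.
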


 \section*{Acknowledgements} 
Zhengming Jiao would like to thank the members of Department of Mathematics at Swansea University for hospitality. His research is supported by the Natural Science Foundation of Henan Province (grant no.\ 102300410049). Tomasz Brzezi\'nski would like to express his deep gratitude to the organisers of the First International Conference on Mathematics and Statistics (AUS-ICMSÕ10), American University of Sharjah, for wonderful hospitality. He  would also like to thank the members of Department of Mathematics at the University of New Brunswick in Fredericton, where this note was completed, for hospitality. His visit to Fredericton was supported by the European Commission grant PIRSES-GA-2008-230836 hereby gratefully acknowledged.


\begin{thebibliography}{99}

\bibitem{Alb:qua} A.A.\ Albert, {\em Quasigroups. I,} Trans.\ Amer.\ Math.\ Soc. 54 (1943), 507--519.

\bibitem{BrzJiao:Act} T.\ Brzezi\'nski and Z. Jiao,  {\em Actions of Hopf quasigroups}, Preprint arXiv:1005.2496v1 (2010).

\bibitem{CaeIon:Sma} S.\ Caenepeel, B.\ Ion, G.\ Militaru and Shenglin\ Zhu, {\em The factorization problem and the smash biproduct of algebras and coalgeras},  {Algebr.\ Represent.\ Theory}  {3}  (2000),  19--42.

\bibitem{KliMaj:Hop} J.\ Klim and S.\ Majid, {\em Hopf quasigroups and the algebraic 7-sphere},  {J.\ Algebra} 323 (2010), 3067--3110.

\bibitem{Per:alg} J.M.\ P\'erez-Izquierdo, {\em Algebras, hyperalgebras, nonassociative bialgebras and loops}, Adv.\ Math.\ 208 (2007), 834--876.

\end{thebibliography}
 \end{document}